\newcommand{\cstar}{$C^{*}$}
\newcommand{\N}{\mathbb{N}}
\newcommand{\R}{\mathbb{R}}
\newcommand{\Z}{\mathbb{Z}}
\newcommand{\C}{\mathbb{C}}
\newcommand{\F}{\mathcal{F}}
\newcommand{\G}{\mathcal{G}}
\newcommand{\U}{\mathcal{U}}
\renewcommand{\P}{\mathcal{P}}
\newcommand{\uK}{\underline{K}}
\newcommand{\jiangsu}{$\mathcal{Z}$}
\newcommand{\algK}{K_1^{\operatorname{alg}}}
\DeclarePairedDelimiter\abs{\lvert}{\rvert}
\DeclarePairedDelimiter\norm{\lVert}{\rVert}
\DeclarePairedDelimiter\floor{\lfloor}{\rfloor}
\let\oldabs\abs
\def\abs{\@ifstar{\oldabs}{\oldabs*}}
\let\oldnorm\norm
\def\norm{\@ifstar{\oldnorm}{\oldnorm*}}
\let\oldfloor\floor
\def\floor{\@ifstar{\oldfloor}{\oldfloor*}}
\DeclareMathOperator{\atom}{at}
\DeclareMathOperator{\Aff}{Aff}
\DeclareMathOperator{\dist}{dist}
\DeclareMathOperator{\ext}{ext}
\DeclareMathOperator{\Hom}{Hom}
\DeclareMathOperator{\sa}{sa}
\DeclareMathOperator{\spec}{sp}
\DeclareMathOperator{\Tr}{Tr}
\newtheorem{lem}{Lemma}[section]
\newtheorem{prp}[lem]{Proposition}
\newtheorem{thm}[lem]{Theorem}
\theoremstyle{definition}
\newtheorem{dfn}[lem]{Definition}
\theoremstyle{remark}
\title[Approximate Diagonalization of Unital Homomorphisms]
{Approximate Diagonalization of Unital Homomorphisms from AH-Algebras to
Certain Simple Classifiable \cstar-Algebras}
\author{Min Yong Ro}
\email{minyongro\-@gmail.com}
\begin{document}

\begin{abstract}
In this paper, we prove that unital homomorphisms from a commutative
\cstar-algebra to matrices over a \cstar-algebra with tracial rank at
most one are approximately diagonalizable. We also consider some
generalizations of this result.
\end{abstract}

\maketitle

\section{Introduction} \label{sec:intro}
One of the fundamental facts in linear algebra is that normal matrices
over $\C$ are unitarily equivalent to diagonal matrices. Given the
importance of matrices over \cstar-algebras, it is a natural question to
ask whether a normal matrices with entries in a \cstar-algebra are
diagonalizable in a similar way.

Richard Kadison demonstrated in \cite{kad-diag} that normal matrices
over a von Neumann algebra are diagonalizable. Kadison also posed the
question: for what topological spaces $X$ is every normal matrix over
$C(X)$ diagonalizable? Karsten Grove and Gert Pedersen answered this
question in \cite{grpd-diag} that $X$ must, among other topological
restrictions, be sub-Stonean. This suggests that diagonalization for
normal matrices is restricted to a particularly special class of
\cstar-algebra. Kadison's proof, for example, is based on type
decomposition and the abundance of projections in maximal abelian
subalgebras of von Neumann algebras. This can be extended to show
diagonalization in \cstar-algebras with similar properties, such as
$AW^{*}$-algebras, as seen in \cite{here-diag}, but does not generalize
to larger classes of \cstar-algebras.

If we instead consider approximate diagonalization, then the situation
improves. For example, Yifeng Xue proved in \cite{xue-diag} that
every self-adjoint matrix over $C(X)$ is approximately diagonalizable
if $\dim(X)\leq 2$ and $\check{H}^2(X,\Z) = 0$. Further, Huaxin Lin
proved in \cite{lin-diag} that if $X$ is locally an absolute retract and
$Y$ has $\dim(Y)\leq 2$, then every unital homomorphism from $C(X)$ to
$M_n(C(Y))$ is approximately diagonalizable. In the noncommutative case,
Shuang Zhang proved in \cite{zh-diag} that self-adjoint matrices over
a \cstar-algebra with real rank zero are approximately diagonalizable.

Recall that if $a$ is a normal element in $M_n(A)$ for some unital
\cstar-algebra $A$, then continuous functional calculus induces a unital
homomorphism $\phi\colon C(\spec(a)) \to M_n(A)$. It is easy to
see that approximate diagonalization of the element $a$ is equivalent to
the approximate diagonalization of the induced homomorphism $\phi$.

We consider a slight generalization of the typical matricial
approximate diagonalization.

\begin{dfn}
Let $C$ and $A$ be unital \cstar-algebras. A unital homomorphism
$\phi\colon C\to A$ is {\em approximate diagonalizable} if for any
$\varepsilon > 0$, a finite set $\F\subseteq C$, a positive integer $n$,
and mutually orthogonal projections $e_1, \dotsc, e_n\in A$, there exist
unital homomorphisms $\phi_i\colon C\to e_iAe_i$ and a unitary $u\in A$
such that
\[
\norm{u\phi(f)u^{*} - \sum_{i=1}^n \phi_i(f)} < \varepsilon
\]
for all $f\in F$.
\end{dfn}

Note that this definition implies than the standard matricial
notion of approximate diagonalization by considering $M_n(A)$ for $A$
and projections $e_{i,i}\otimes 1_A$ for $e_i$, where $e_{i,j}$ denotes
the standard matrix units.

The main result of the paper is the approximate diagonalization of
unital homomorphisms from $C(X)$ to \cstar-algebras of tracial rank at
most one for any compact metric space $X$. To prove this result, we use
the classification of unital monomorphisms from AH-algebras to
\cstar-algebras of tracial rank at most one proved by Lin in
\cite{lin-trone}. In Section \ref{sec:prelim}, we review the invariants
used in Lin's classification theorems. In Section \ref{sec:groups}, we
prove a few lemmas related to the decomposition of ordered group
homomorphisms. In Section \ref{sec:main}, we
prove the main result. Though the classification of monomorphisms holds
for larger classes of domains and codomains, approximate diagonalization
does not hold generally in those cases. We give some limited results of
the approximate diagonalization of other homomorphisms in Section
\ref{sec:other}.

\section{Preliminaries} \label{sec:prelim} 

We use the notation found in \cite{lin-trone} and \cite{lin-niu-cl}. In
particular, if $A$ is a unital \cstar-algebra, let $T(A)$ denote the
space of tracial states of $A$ and $\Aff(T(A))$ as the partially ordered
vector space of continuous affine real-valued maps on $T(A)$. There is a
natural pairing between $K_0(A)$ and $T(A)$, which we describe with a
normalized positive group homomorphism
$\rho_A\colon K_0(A)\to \Aff(T(A))$ defined by
$\rho_A([p]) = \tau\otimes\Tr(p)$ for $p\in M_{\infty}(A)$, where $\Tr$
is the unnormalized trace on $M_{\infty}(\C)$. Given another
unital \cstar-algebra $C$ and a unital homomorphism from $C$ to $A$, by
naturality, a commutative square is induced from this pairing. On the
other hand, a normalized positive group homomorphism
$\alpha\colon K_0(C) \to K_0(A)$ and a unital positive linear map
$\gamma\colon \Aff(T(C)) \to \Aff(T(A))$ are called {\em compatible} if
$\rho_{A}\circ\alpha = \gamma\circ \rho_{C}$.

$KL(C,A)$ is the quotient of $KK(C,A)$ by the group of pure extensions
of $K_{*}(C)$ by $K_{*+1}(A)$. See Section 2.4.8 of \cite{ror-cl} for
details. Since the $KL$-group is a quotient of the $KK$-group we have a
$KL$ version of the UCT (see equation 2.4.9 of \cite{ror-cl}) when $C$
satisfies the UCT:
\[
0 \to \ext(K_{*}(C), K_{*+1}(A)) \xrightarrow{\varepsilon} KL(C,A)
\xrightarrow{\Gamma} \Hom(K_{*}(C),K_{*}(A)) \to 0.
\]
For any unital \cstar-algebra $A$, let
$\uK(A) = \oplus_{k=0}^{\infty} \oplus_{i=0}^{1} K(A;\Z/k)$. By
Dadarlat and Loring (\cite{umct}), if $C$ is a \cstar-algebra
satisfying UCT and $A$ is a $\sigma$-unital \cstar-algebra, then we have
\[
KL(C,A) \cong \Hom_{\Lambda}(\uK(C),\uK(A)),
\]
where the homomorphisms are graded group homomorphisms that preserve
certain Bockstein operations. See \cite{umct} for details. We will
identify $KL(C,A)$ with this group of homomorphisms.

Let $KL_e(C,A)^{++}$ denote the set of $\kappa\in KL(C,A)$ satisfying
$\Gamma(\kappa)(K_{0}(C)^{+}\setminus\{0\}) \subseteq K_0(A)\setminus\{0\}$
and $\Gamma(\kappa)([1_C]_0) = [1_A]_0$. We call $\kappa\in KL_e(C,A)^{++}$
and a unital positive linear map $\gamma\colon \Aff(T(C))\to \Aff(T(A))$ 
{\em compatible} if the restriction of $\Gamma(\kappa)$ to $K_0(C)$ and
$\gamma$ are compatible.

Notice that for any compact metric space $X$, the range of $\rho_{C(X)}$
is isomorphic to $C(X,\Z)$. Consequently, the short-exact sequence:
\[
0 \to \ker\rho_{C(X)} \to K_0(C(X)) \to C(X,\Z) \to 0
\]
is split, since $C(X,\Z)$ is a free abelian group. This is apparent
in the case when $X$ has finitely many connected components, where
$C(X,\Z)$ is generated by the characteristic functions of the connected
components of $X$. Furthermore, we note that
$\Aff(T(C(X))) \cong C(X)_{\sa}$.

Let the group of unitaries of $C$ be denoted by $U(C)$, the normal
subgroup of the connected component containing $1_C$ by $U_0(C)$, the
closed normal subgroup generated by the commutators of $U(C)$ by
$CU(C)$, and $CU_0(C) = CU(C) \cap U_0(C)$. We also define
$U^{\infty}(C) = \cup_{n=1}^{\infty} U(M_n(C))$, and similarly define
$U_0^{\infty}(C)$, $CU^{\infty}(C)$, and $CU^{\infty}_0(C)$. Let
$\algK(C) = U^{\infty}(C)/CU^{\infty}(C)$. For every unitary
$u\in U^{\infty}(C)$, the equivalence class in $\algK(C)$ containing $u$
is denoted by $\bar{u}$.

As seen in \cite{thom-traces}, we have the following short-exact
sequence:
\[
0 \to \Aff(T(C))/\rho_C(K_0(C)) \to \algK(C) \to K_1(C) \to 0
\]
This short-exact sequence is split, though unnaturally. Let $\pi_C$
denote the quotient map $\algK(C) \to K_1(C)$. Given a unital
homomorphism $\phi\colon C\to A$, let the induced continuous
homomorphism be denoted by
$\phi^{\ddagger}\colon \algK(C) \to \algK(A)$.

Suppose $\kappa\in KL_e^{++}(C,A)$ and
$\gamma\colon \Aff(T(C))\to \Aff(T(A))$ are compatible.
Let $\eta\colon \algK(C) \to \algK(A)$ be a continuous homomorphism.
If the restriction of $\eta$ to $\Aff(T(C))/\overline{\rho_C(K_0(C))}$
is equal to the homomorphism induced from $\gamma$ and the restriction
of $\eta$ and $\kappa$ to $K_1(C)$ are equal, then we say that
$\kappa$, $\gamma$, and $\eta$ are {\em compatible}.

We conclude this section with a uniqueness theorem of Lin's:

\begin{thm} \label{thm:trone-unique}
Let $C$ be a unital AH-algebra and let $A$ be a separable simple unital
\cstar-algebra with tracial rank at most one. Let $\phi\colon C\to A$
be a unital monomorphism. For every $\varepsilon > 0$ and
every finite set $\F\subseteq C$, there exist $\delta > 0$, a finite
set $\P\subseteq \uK(C)$, a finite set $\U\subseteq U^{\infty}(C)$, and
a finite set $\G\subseteq C_{\sa}$ such that that for any unital
homomorphism $\psi\colon C\to A$, if
$KL(\phi)(p) = KL(\psi)(p)$ for $p\in \P$,
$\dist(\phi^{\ddagger}(\bar{z}),\psi^{\ddagger}(\bar{z})) < \delta$ for
$z\in \U$,
and $\abs{\tau\circ\phi(g) - \tau\circ\psi(g)} < \delta$ for $g\in\G$,
then there exists a unitary $u\in A$ such that
\[
\norm{u\phi(f)u^{\ast} - \psi(f)} < \varepsilon
\]
for all $f\in \F$.
\end{thm}

This is simply Corollary 11.6 of \cite{lin-auetrone} without the
condition that $C$ has Property (J). The same proof works in light of
Theorem 5.8 and Lemma 5.7(2) of \cite{lin-trone}.

\section{Decomposition of Ordered Group Homomorphisms}
\label{sec:groups}

As a group, $K_0(C(X))$ can be written as the inductive limit of
finitely generated abelian groups and so it is a relatively
straightforward matter to define homomorphisms from $K_0(C(X))$. The
ordering of $K_0(C(X))$ is not easily determined since topological
properties may lead to perforation. Fortunately, if the target of the
homomorphism has an ordering determined by its traces, then these
challenges can be managed, and we can define positive group
homomorphisms. We adopt some language and notation about partially
ordered abelian groups from \cite{goodearl}. 

We define the group homomorphism
$\rho_G\colon G\to \Aff(S(G),1)$ by $\rho_G(g)(\sigma)
= \sigma(g)$. We note that the intersection of the kernel of the traces
of $(G,u)$ is equal to $\ker\rho_G$. Also when $C$ is exact, $\rho_C
= \rho_{K_0(C)}$.

Let $\atom(G)$ denote the subgroup of $G$ generated by its atoms.
When $X$ is topological space with finitely many connected components,
the characteristic functions of those components are the atoms of
$K_0(C(X))^{+}$ and $\atom(K_0(C(X))) \cong C(X,\Z)$. As noted in
Section \ref{sec:prelim}, when $X$ has finitely many connected
components, $K_0(C(X))$ can be decomposed into the direct
sum of $\atom(K_0(C(X)))$ and $\ker\rho_{C(X)}$.

In contrast, when a partially ordered abelian group $G$ is simple,
$G^{+}$ contains no atoms except when $G$ is cyclic (Lemma 14.2 of
\cite{goodearl}). It will be useful to treat $\Z$ separately. For
example, when $G$ is a non-cyclic, simple interpolation group,
$G$ also satisfies a strict version of interpolation (see Proposition
14.6 of \cite{goodearl}).

\begin{dfn}
A partially ordered abelian group $G$ has \emph{strict interpolation} if
for all $x_1$, $x_2$, $y_1$, $y_2 \in G$ such that $x_i < y_j$ for all
$i,j$, there exists $z\in G$ such that $x_i < z < y_j$ for all $i,j$.
\end{dfn}

Strict versions of the Riesz decomposition properties follow with
analogous proofs. See, for example, Propositions 2.1 and 2.2 of
\cite{goodearl}.

\begin{prp}
Let $G$ be a partially ordered abelian group. The following are
equivalent:
\begin{asparaenum}[(a)]
\item $G$ has strict interpolation.

\item If $x,y_1,y_2\in G$ satisfying $0 < x < y_1 + y_2$, then there
exist $x_1,x_2\in G^{+}\setminus\{0\}$ such that $x_1 + x_2 = x$ and
$x_i < y_i$ for $i=1,2$.

\item If $x_1,x_2,y_1,y_2 \in G^{+}\setminus\{0\}$ satisfying $x_1+x_2=
y_1 + y_2$, then there exist $z_{i,j}\in G^{+}\setminus\{0\}$ such that
$x_i = z_{i,1} + z_{i,2}$ and $y_j = z_{1,j} + z_{2,j}$ for $i=1,2$,
$j=1,2$.
\end{asparaenum}
\end{prp}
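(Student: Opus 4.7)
\emph{Proof plan.} My plan is to establish the cyclic implications (a) $\Rightarrow$ (b) $\Rightarrow$ (c) $\Rightarrow$ (a), mirroring the scheme of Propositions 2.1 and 2.2 of \cite{goodearl} but verifying at each stage that strictness of the input inequalities propagates to strictness of the output. Most of the work is bookkeeping, with only one moderately subtle choice appearing in the last implication.

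For (a) $\Rightarrow$ (b), given $0 < x < y_1 + y_2$ (noting that $y_1, y_2 > 0$ is forced if the stated conclusion is to be satisfiable), I would check that the four inequalities $0 < x$, $0 < y_1$, $x - y_2 < x$, $x - y_2 < y_1$ all hold strictly, and apply strict interpolation to the pairs $\{0,\,x - y_2\}$ and $\{x,\,y_1\}$ to obtain $x_1$ with $0,\,x - y_2 < x_1 < x,\,y_1$. Setting $x_2 = x - x_1$, strictness of $x_1 < x$ gives $x_2 > 0$, and strictness of $x - y_2 < x_1$ gives $x_2 < y_2$.

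For (b) $\Rightarrow$ (c), given $x_1 + x_2 = y_1 + y_2$ with all four summands in $G^+ \setminus \{0\}$, I would apply (b) to the strict inequality $0 < x_1 < y_1 + y_2$ to decompose $x_1 = z_{11} + z_{12}$ with $z_{1j} \in G^+ \setminus \{0\}$ and $z_{1j} < y_j$. Defining $z_{2j} = y_j - z_{1j}$, the strict inequalities give $z_{2j} > 0$, and the identity $z_{21} + z_{22} = y_1 + y_2 - x_1 = x_2$ completes the refinement matrix.

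For (c) $\Rightarrow$ (a), given $x_i < y_j$ for all $i,j$, I would apply (c) to the identity $(y_1 - x_1) + (y_2 - x_2) = (y_1 - x_2) + (y_2 - x_1)$, whose four summands lie in $G^+ \setminus \{0\}$, to produce $z_{ij} \in G^+ \setminus \{0\}$ refining it. The key move is to set $z = y_1 - z_{11}$: then $y_1 - z = z_{11}$, $z - x_1 = z_{12}$, $z - x_2 = z_{21}$, and the refinement relations rewrite $y_2 - z = y_2 - y_1 + z_{11}$ as $z_{22}$, so all four gaps are strictly positive and $x_i < z < y_j$ for every $i,j$. The main obstacle is this choice of $z$; the naive candidates such as $z = x_1 + z_{11}$ or $z = x_2 + z_{21}$ only yield two of the four required strict inequalities, and one needs to consult the refinement matrix to see that $y_1 - z_{11}$ is the symmetric choice that works.
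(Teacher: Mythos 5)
Your proof is correct and is exactly the adaptation of Goodearl's Propositions 2.1--2.2 that the paper invokes without writing out: the interpolation between $\{0,\,x-y_2\}$ and $\{x,\,y_1\}$, the refinement of $(y_1-x_1)+(y_2-x_2)=(y_1-x_2)+(y_2-x_1)$, and the choice $z=y_1-z_{11}$ (which equals $x_1+z_{12}$ and makes all four gaps $x_i \to z \to y_j$ equal to the strictly positive entries $z_{i,j}$) all check out. You are also right that (b) must be read with $y_1,y_2\in G^{+}\setminus\{0\}$ adjoined to the hypothesis --- as literally stated, with $y_1,y_2\in G$ arbitrary, the implication (a)$\Rightarrow$(b) fails already in $\Q$ via $0<1<4+(-2)$ --- so flagging that is a correction to the statement, not a gap in your argument.
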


\begin{prp}
Let $G$ be a partially ordered abelian group with strict interpolation.
Then the following hold:
\begin{asparaenum}[(a)]
\item If $x_1, x_2,\dotsc x_n$ and $y_1,\dotsc, y_k$ are in $G$ such
that $x_i < y_j$ for all $i,j$, then there exists $z\in G$ such that
$x_i < z < y_j$ for all $i,j$.

\item If $x,y_1,y_2,\dotsc,y_n\in G^{+}\setminus\{0\}$ satisfying
$x < y_1 + y_2 + \dotsc y_n$, then there exist
$x_1,\dotsc,x_n\in G^{+}\setminus\{0\}$ such that $x = x_1+\dotsb+x_n$
and $x_i < y_i$ for all $i$.

\item If $x_1,\dotsc,x_n,y_1,\dotsc,y_k\in G^{+}\setminus\{0\}$, then
there exists $z_{i,j}$ for $i=1,2,\dotsc,n$, $j=1,2,\dotsc,k$ such that
$x_i = z_{i,1} + \dotsb + z_{i,k}$ and
$y_j = z_{1,j} + \dotsb + z_{n,j}$.
\end{asparaenum}
\end{prp}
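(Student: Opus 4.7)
The plan is to reduce each of the three statements to its $n=k=2$ counterpart from the preceding proposition by induction, exploiting the fact that strict interpolation forces every interpolant and every summand to land in $G^{+}\setminus\{0\}$ with \emph{strict} inequalities preserved, which is exactly what is needed to feed the next step of the induction.

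For part (a), I would induct on $n+k$, with base case $n=k=2$ being strict interpolation itself. For the inductive step, assume without loss of generality that $n\geq 3$. The inductive hypothesis applied to the smaller $(n-1,k)$ configuration $\{x_2,\dotsc,x_n\}$ against $\{y_1,\dotsc,y_k\}$ (whose pairwise strict inequalities are inherited) produces some $w$ with $x_i<w<y_j$ for $i\geq 2$ and all $j$. Then the inductive hypothesis in the $(2,k)$ case applied to $\{x_1,w\}$ against $\{y_1,\dotsc,y_k\}$ yields $z$ with $x_1<z$, $w<z$, and $z<y_j$; since $z>w>x_i$ for $i\geq 2$, this $z$ works.

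For part (b), I would induct on $n$, with $n=2$ handled by part (b) of the previous proposition. For the inductive step, bracket $y_1+(y_2+\dotsb+y_n)$ and apply the $n=2$ case to the strict inequality $0<x<y_1+(y_2+\dotsb+y_n)$ to obtain $x=x_1+x'$ with $0<x_1<y_1$ and $0<x'<y_2+\dotsb+y_n$; the inductive hypothesis applied to $x'$ then completes the decomposition. For part (c)—which I read as implicitly assuming the equality $x_1+\dotsb+x_n=y_1+\dotsb+y_k$, standard for Riesz refinement—I would again induct on $n$. The base case $n=1$ sets $z_{1,j}=y_j$. For $n>1$, the strict inequality $x_1<y_1+\dotsb+y_k$ holds because $x_2+\dotsb+x_n\in G^{+}\setminus\{0\}$, so part (b) produces $x_1=z_{1,1}+\dotsb+z_{1,k}$ with $0<z_{1,j}<y_j$. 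The elements $y_j-z_{1,j}$ then lie in $G^{+}\setminus\{0\}$ and satisfy $\sum_{j}(y_j-z_{1,j})=x_2+\dotsb+x_n$, so the inductive hypothesis applied to this shorter equation produces the remaining $z_{i,j}$ for $i\geq 2$.

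The main obstacle throughout is simply bookkeeping the strict positivity: ordinary Riesz interpolation would allow zero components to appear, which would both violate the conclusions as stated (which require $G^{+}\setminus\{0\}$) and, more seriously, break the next round of the induction by collapsing a strict inequality into an equality. The point of the hypothesis of strict interpolation is precisely to prevent this, so the inductions are routine once the bookkeeping is maintained.
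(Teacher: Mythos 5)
Your proof is correct and is exactly the argument the paper intends: the paper gives no proof of this proposition, merely remarking that the strict versions ``follow with analogous proofs'' to Goodearl's Propositions 2.1 and 2.2, and those analogous proofs are precisely your inductions reducing to the $2\times 2$ cases of the preceding proposition while tracking strict positivity. Your reading of part (c) as implicitly assuming $x_1+\dotsb+x_n=y_1+\dotsb+y_k$ is also the correct repair of a hypothesis the paper's statement omits.
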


When a partially ordered abelian group $G$ is simple and weakly
unperforated, the order on $G$ is determined by its traces. Namely for
all $x\in G$, $x > 0$ if and only if $\sigma(x) > 0$ for all
$\sigma\in S(G,u)$. 

\begin{lem} \label{lem:gp-base}
Let $G$ be a partially ordered abelian group such that $G^{+}$ has
finitely many atoms $\{x_1,x_2,\dotsc, x_k\}$ and $u = \sum_{j=1}^k x_j$
is an order unit. Suppose $G = \atom(G) \oplus \ker\rho_G$.
Let $n \geq 1$ be an integer and let $H$ be a simple interpolation group
and order units $v_i$ for $i=1,2,\dotsc,n$.
 
For any normalized positive group homomorphism
$\alpha\colon (G,u)\to (H,\sum_{i=1}^{n}v_i)$, there exist normalized
positive group homomorphisms $\alpha_i\colon (G,u)\to (H,v_i)$ for
$i=1,2,\dotsc,n$ such that
$\alpha = \alpha_1 + \alpha_2 + \dotsb + \alpha_n$ and
$\ker\rho_G\subseteq \ker\,\alpha_i$ for $i>1$.

Furthermore, if $H$ has strict interpolation and
$\ker\alpha \cap \atom(G) = 0$, then we can arrange it so that
$\ker\alpha_i\cap \atom(G) = 0$ for all $i$.
\end{lem}

\begin{proof}
Since $\alpha(x_1) + \alpha(x_2) + \dotsb + \alpha(x_k) =
\alpha(u) = v_1 + v_2 + \dotsb + v_n$, by the Riesz interpolation
property, there exist $z_{i,j}\in H^{+}$ for $i=1,2,\dotsc,n$,
$j=1,2,\dotsc,k$ such that
\[
\sum_{i=1}^{n} z_{i,j} = \alpha(x_j) \text{ and }
\sum_{j=1}^{k} z_{i,j} = v_i
\]

We define $\alpha_i\colon G\to H$ by setting $\alpha_i(x_j) = z_{i,j}$
for all $i$ and $j$, setting $\alpha_1(g) = \alpha(g)$ for
$g\in \ker\rho_G$, and setting $\alpha_i(g) = 0$ for $g\in \ker\rho$ and
$i>1$. Since the set of atoms is $\Z$-independent (Lemma 3.10 of
\cite{goodearl}), $\alpha_i$ is a group homomorphism for every $i$. By
construction, $\ker\rho_G\subseteq \ker\alpha_i$ for $i>1$.

Since $\sum_{i=1}^n \alpha_i = \alpha_1 = \alpha$ on $\ker\rho_G$ and
$\sum_{i=1}^n \alpha_i(x_j) = \sum_{i=1}^n z_{i,j} =
\alpha(x_j)$, we have $\sum_{i=1}^n \alpha_i = \alpha$.
Let $x\in G^{+}$. There exist non-negative integers $m_j$ for
$j=1,2,\dotsc,k$ and $g\in \ker\rho$ so that
$x = g + \sum_{j=1}^k m_j x_j$. Take $\tau\in S(G)$. Since
$\tau\circ\alpha_i\in S(G,u)$, we have $\tau(\alpha_i(g)) = 0$ for all
$i$ and so $\tau(\alpha_i(x)) = \sum_{j=1}^n m_j \tau(z_{i,j}) \geq 0$.
So we have that
$\alpha_i(x) \geq 0$, and so $\alpha_i$ are positive group
homomorphisms. Also $\alpha_i(u) = \alpha_i(\sum_{j=1}^k x_j)
= \sum_{j=1}^k \alpha_i(x_j) = \sum_{j=1}^k z_{i,j} = v_i$. So
$\alpha_i\colon (G,u) \to (H,v_i)$ is a normalized positive group
homomorphism for every $i$.

Suppose that $G$ has strict interpolation and $\ker\alpha \cap
\atom(G) = 0$. Then $\alpha(x_j) > 0$ and
since $v_i > 0$, by strict comparison, we can arrange $z_{i,j} > 0$. And
so $\ker\alpha_i \cap \atom(G) = 0$ for all
$i$. 
\end{proof}

\begin{lem} \label{lem:gp-induction}
Let $G_1$ and $G_2$ be partially ordered abelian groups such that
$G_1^{+}$ has finitely many atoms $\{x_1,x_2,\dotsc,x_k\}$ and 
$G_2^{+}$ has finitely many atoms $\{y_1,y_2,\dotsc,y_m\}$, where
$u_1 = \sum_{j=1}^k x_j$ and $u_2 = \sum_{t=1}^m y_t$ are order units.
Suppose that $G_1 = \atom(G_1) \oplus \ker\rho_{G_1}$
and  $G_2 = \atom(G_2) \oplus \ker\rho_{G_2}$.
Let $n \geq 1$ be an integer and let $H$ be a simple interpolation
group with order units $v_i$ for $i=1,2,\dotsc,n$.

Let $\alpha\colon G_1\to G_2$ be a normalized positive
group homomorphism such that
$\alpha(\atom(G_1))
\subseteq \atom(G_2)$. Let
$\beta_s\colon (G_s,u_s)\to (H,\sum_{i=1}^n v_i)$ be normalized positive
group homomorphisms for $s=1,2$ such that
$\beta_1 = \alpha\circ \beta_2$. If there exist
$\beta_{1,i}\colon (G_1,u_1)\to (H,v_i)$ such that
$\sum_{i=1}^n \beta_{1,i} = \beta_1$ and
$\ker\rho_{G_1}\subseteq \ker\beta_{1,i}$ for $i>1$, then there exist
$\beta_{2,i}\colon (G_2,u_2)\to (H,v_i)$ such that
$\sum_{i=1}^n \beta_{2,i} = \beta_2$,
$\ker\rho_{G_2}\subseteq \ker\beta_{2,i}$ for $i>1$, and
$\beta_{1,i} = \beta_{2,i}\circ \alpha$ for all $i$.

Furthermore, if $H$ has strict interpolation,
if $\ker\alpha\cap \atom(G_1) = 0$, $\ker\beta_2 \cap \atom(G_2) = 0$,
and $\ker\beta_{1,i} \cap \atom(G_1) = 0$ for all $i$,
then we can arrange it so that
$\ker\beta_{2,i} \cap \atom(G_2) = 0$ for all $i$.
\end{lem}

\begin{proof}
Since $\alpha$ is a positive homomorphism, $\alpha(u_1) = u_2$, and
$\alpha(\atom(G_1)) \subseteq \atom(G_2)$, for each $j=1,2,\dotsc,k$
there exists a subset $S_j\subseteq \{1,2,\dotsc,m\}$ such that
$\alpha(x_j)= \sum_{t\in S(j)} y_t$.
Furthermore, $S_i\cap S_j = \varnothing$ if $i\neq j$
and $\bigcup_{j=1}^k S_j = \{1,2,\dotsc,m\}$. So we have 
\[
\sum_{t\in S_j} \beta_2(y_t) = \beta_2(\alpha(x_j)) = \beta_1(x_j) =
\sum_{i=1}^n \beta_{1,i}(x_j).
\]
By the Riesz interpolation property, there exist $z_{i,t}\in H^{+}$
for $t\in S_j$ and $i=1,2,\dotsc,n$ so that
\[
\sum_{t\in S_j} z_{i,t} = \beta_{1,i}(x_j) \text{ and }
\sum_{i=1}^n z_{i,t} = \beta_2(y_t).
\]

We define $\beta_{2,i}\colon G_2\to H$ by setting $\beta_{2,i}(y_t)
= z_{i,t}$ for all $i,t$, setting $\beta_{2,1}(g) = \beta_2(g)$ for
$g\in \ker\rho_{G_2}$ and setting $\beta_{2,i}(g) = 0$ for
$g\in \ker\rho_{G_2}$ and $i>1$. We see that $\beta_{2,i}$ are
well-defined since the sets $S_j$ partition $\{1,2,\dotsc,m\}$ and
$\beta_{2,i}$ are group homomorphisms since atoms are $\Z$-independent.
By construction, $\ker\rho_{G_2}\subseteq \ker\beta_{2,i}$ for $i>1$.

As before, $\sum_{i=1}^n \beta_{2,i} = \beta_{2,1} = \beta_2$ on
$\ker\rho_{G_2}$ and $\sum_{i=1}^n \beta_{2,i}(y_t)
= \sum_{i=1}^n z_{i,t} = \beta_2(y_t)$.
So $\sum_{i=1}^n \beta_{2,i} = \beta_2$.

Notice that for all $\sigma\in S(G_2,u_2)$, we have
$\sigma\circ\alpha \in S(G_1,u_1)$, so if $g\in \ker\rho_{G_1}$, then
$\sigma\circ\alpha(g) = 0$ for all $\sigma\in S(G_2,u_2)$. So
$\alpha(g)\in \ker\rho_{G_2}$.
So on $\ker\rho_{G_1}$, $\beta_{1,i} = 0 = \beta_{2,i}\circ\alpha$
when $i>1$ and
$\beta_{1,1} = \beta_1 = \beta_2\circ\alpha = \beta_{2,1}\circ\alpha$.
Also $\beta_{2,i}(\alpha(x_j)) = \sum_{t\in S_j}\beta_{2,i}(y_t)
= \sum_{t\in S_j} z_{i,t} = \beta_{1,i}(x_j)$ for all $i,j$.
Thus $\beta_{1,i} = \beta_{2,i}\circ\alpha$ for all $i$.
Further, since $\alpha(u_1) = u_2$, we have $\beta_{2,i}(u_2)
= \beta_{1,i}(u_1) = v_i$.

Let $x\in G_2^{+}$. So there exist non-negative integers $r_t$ and
$g\in \ker\rho_{G_2}$ so that $x = g + \sum_{t=1}^m r_t y_t$. Take
$\tau\in T$. Since $\tau\circ\beta_{2,i}\in S(G,u)$, we see
$\tau(\beta_{2,i}(g)) = 0$ and so
$\tau(\beta_{2,i}(x)) = \sum_{t=1}^m r_t \tau(z_{i,t}) \geq 0$.
So $\beta_{2,i}$ are normalized positive group homomorphisms.

we can arrange it so that
$\ker\beta_{2,i} \cap \atom(G_2) = 0$ for all $i$.

Now assume $H$ has strict interpolation, that $\alpha(x_j)\neq 0$ for
$j=1,2,\dotsc,k$, that
$\ker\beta_2 \cap \atom(G_2) = 0$, and that
$\ker\beta_i \cap \atom(G_1) = 0$ for all $i$. 
It follows that all of the $S_j$ are non-empty, that
$\beta_2(y_t) > 0$ and $\beta_{1,i}(x_j) > 0$ for all $i,j,t$.
So from strict interpolation, we can arrange for $z_{i,t} > 0$. So
$\ker\beta_{2,i}\cap \atom(G_2) = 0$ for all $i$. 
\end{proof}

\section{Homomorphisms from $C(X)$ to \cstar-algebras of Tracial Rank
One} \label{sec:main}

The next lemma basically states that approximate diagonalization can
be reduced to the decomposition of compatible $K_0$ and trace maps.

\begin{lem} \label{lem:reduction}
Let $C$ be a unital stably finite \cstar-algebra and
let $A$ be a unital separable stably finite \cstar-algebras.
Assume we are given:
\begin{asparaenum}[1.]
\item a normalized positive group homomorphism
$\alpha\colon K_0(C)\to K_0(A)$,
\item a strictly positive unital linear map
$\gamma\colon \Aff(T(C))\to \Aff(T(A))$, 
\item an element $\kappa\in KL_e(C,A)^{++}$ such that
$\kappa$ restricted to $K_0(C)$ is $\alpha$, and
\item a group homomorphism
$\eta\colon \algK(C) \to \algK(A)$ such that 
$\kappa,\gamma,\eta$ are compatible.
\end{asparaenum}

Suppose there exist positive normalized group homomorphisms
$\alpha_i\colon K_0(C) \to K_0(e_iAe_i)$
and strictly positive linear maps
$\gamma_i\colon \Aff(C)\to \Aff(e_iAe_i)$
for $i=1,2,\dotsc, n$ such that $\alpha_i, \gamma_i$ are
compatible for $i=1,2,\dotsc,n$ and
\begin{align*}
\alpha &= \alpha_1 + \alpha_2 + \dotsb + \alpha_n, \text{ and} \\
\gamma &= \gamma_1 + \gamma_2 + \dotsb + \gamma_n.
\end{align*}

Then there exist elements $\kappa_i \in KL_e(C,e_iAe_i)^{++}$ and
continuous homomorphisms
$\eta_i\colon \algK(C) \to \algK(e_iAe_i)$ for $i=1,2,\dotsc,n$ such
that $\kappa_i, \gamma_i, \eta_i$ are compatible,
\begin{align*}
\kappa 	&= \kappa_1 + \kappa_2 + \dotsb + \kappa_n, \text{ and} \\
\eta	&= \eta_1 + \eta_2 + \dotsb + \eta_n.
\end{align*}
\end{lem}

\begin{proof}
Let $\beta\colon K_1(C)\to K_1(A)$ be the restriction of $\kappa$
to $K_1(C)$. We define group homomorphisms
$\beta_i\colon K_1(C)\to K_1(e_iAe_i)$
by
\[
\beta_i =
\begin{cases}
\beta 	& \text{ if } i = 1 \\
0 		& \text{ if } i \neq 1
\end{cases}
\]
for $i=1, 2, \dotsc, n$. So $\sum_{i=1}^n \beta_i = \beta_1 = \beta$.

For $1< i\leq n$, by the UCT, there exist $\kappa_i\in KL(C,e_iAe_i)$ such
that $\Gamma(\kappa_i) = (\alpha_i,\beta_i)$. We set
$\kappa_1 = \kappa - \sum_{i=2}^{n} \kappa_i$.
Notice that
$
\Gamma(\kappa_1) = (\alpha,\beta) - \sum_{i=2}^n (\alpha_i,\beta_i)
= (\alpha_1,\beta_1)$.
Since $\alpha_i$ is a positive, normalized group homomorphism,
compatible with $\gamma_i$, it follows that
$\kappa_i\in KL_e(C,e_iAe_i)^{++}$ is compatible with $\gamma_i$, and
by construction, $\kappa_1 + \kappa_2 + \dotsb + \kappa_n = \kappa$.

The compatible pair $(\kappa_i, \gamma_i)$ induces the group
homomorphism
\[
\eta_i^{0} \colon \Aff(T(C))/\rho_{C}(K_0(C)) \to
\Aff(T(e_iAe_i))/\rho_{e_iAe_i}(K_0(e_iAe_i)).
\]

We extend $\eta_i^{0}$ to a homomorphism
$\eta_i\colon \algK(C) \to \algK(e_iAe_i)$ by setting
\[
\eta_i(u) =
\begin{cases}
\eta(u)	& \text{ if } i=1 \\
0		& \text{ if } i\neq 1.
\end{cases}
\]
for $u\in K_1(C)$. By naturality, we have
$\pi_A\circ \eta_1 = \beta\circ \pi_{C} = \beta_1\circ\pi_{C}$,
and so $\kappa_1, \eta_1$ are compatible.
Since $\beta_i = 0 = \eta_i$ on $K_1(C)$ for $i=2,3,\dotsc,n$,
$\kappa_i, \eta_i$ are compatible for $i=2,3,\dotsc,n$.
By construction, $\gamma_i, \eta_i$ are compatible for
$i = 1,2,\dotsc, n$.
We see that the $\kappa_i, \gamma_i, \eta_i$ are compatible for
$i = 1,2,\dotsc, n$. Since $\eta_i$ restrict to $\beta_i$ on $K_1(C)$
and $\eta_i$ is induced from $\gamma_i$ on
$\Aff(T(C))/\rho_{C}(K_0(C))$, we have
$\eta_1 + \eta_2 + \dotsb + \eta_n = \eta$ on $\algK(C)$.
\end{proof}

\begin{thm} \label{thm:approxdiag-trone}
Let $X$ be a compact metric space. Let $A$ be a separable simple
unital \cstar-algebra with tracial rank at most one. 
Every unital homomorphism $\phi\colon C(X) \to A$
is approximately diagonalizable.
\end{thm}

\begin{proof}
By factoring out the kernel and applying the Gelfand-Naimark
theorem, we may assume, without loss of generality, that $\phi$ is
injective.

Let $\varepsilon > 0$ and let $\F\subseteq C(X)$ be a finite subset. By
Theorem \ref{thm:trone-unique}, there exist $\delta > 0$, a finite subset
$\G\subseteq C(X)$, a finite subset $\P\subseteq \uK(C(X))$, and a
finite subset $\U\subseteq U^{\infty}(C(X))$ such that for any unital
homomorphism $\psi\colon C(X)\to A$, if
\begin{asparaenum}[1.]
\item $KL(\phi) = KL(\psi)$ on $\P$,
\item $\dist(\phi^{\ddagger}(\bar{z}),\psi^{\ddagger}(\bar{z})) < \delta$
for $z\in \U$, and
\item $\abs{\tau\circ\phi(g) - \tau\circ\psi(g)} < \delta$ for $g\in\G$,
\end{asparaenum}
then there exists a unitary $u\in A$ such that
\[
\norm{ u\phi(f)u^{*} - \psi(f) } < \varepsilon
\]
for all $f\in \F$. 

Since $X$ is a compact metric space, there exist finite simplicial
complexes $X_m$ for $m\in \N$ and unital homomorphisms
$s_m \colon C(X_m) \to C(X_{m+1})$ such that
$C(X) \cong \varinjlim C(X_m)$. 
Let $s_{m,\infty}\colon C(X_m) \to C(X)$ denote the homomorphisms
induced by the inductive limit. Let $k(m)$ denote the number of
connected components of $X_m$ and let $\chi_m^j$ the characteristic
functions of the connected components of $X_m$ for $j=1,2,\dotsc,k(m)$.
We may assume that $s_{m}(\chi_m^j)\neq 0$ for all $j$.

Since $\G$ is finite, there exist an integer $M$ and a finite set
$\G'\subseteq C(X_M)_{\sa}$ such that for every $g\in \G$, there exists
$g'\in \G'$ such that $\norm{g - s_{M,\infty}(g')} < \delta/2$.

Furthermore, by taking a possibly larger value of $M$, there exists a
finite set $\U'\subseteq U^{\infty}(C(X_M))$ such
that for every $u\in \U$, there exists $u_0\in \U'$ such that
$\dist(\bar{u},s_{M,\infty}^{\ddagger}(\bar{u}_0)) < \delta/2$.

Since $X_M$ has finitely many connected components, $C(X_M,\Z)$ is
generated by the atoms of $K_0(C(X_M))_{+}$ and so
\[
K_0(C(X_M)) = C(X_M,\Z)\oplus \ker\rho_{C(X_M)}.
\]
In addition we see that since $\phi$ is injective, 
$\ker K_0(\phi)\cap C(X,\Z) = 0$. So by Lemma \ref{lem:gp-base}, there
exist normalized group homomorphisms
$\alpha_{i,M}\colon (K_0(C(X_M)),1_{C(X_M)}) \to (K_0(A),[e_i])$ such that
$\ker\alpha_{i,M} \cap C(X_M,\Z) = 0$ for all $i$, 
$\ker\alpha_{i,M} = \ker\rho_{C(X_M)}$ when $i>1$ and
\[
K_0(\phi\circ s_{M,\infty}) = \sum_{i=1}^n \alpha_{i,M}.
\]

Since $A$ has stable rank one, there exist non-zero mutually orthogonal
projections $p^M_{i,j} \in A$ for $i=1,2,\dotsc,n$ and
$j=1,2,\dotsc,k(M)$ such that
$[p^M_{i,j}] = \alpha_{i,M}(\chi_M^j)$
and 
\[
\phi(s_{M,\infty}(\chi_M^j)) = \sum_{i=1}^n p^M_{i,j}. 
\]
We define
$\gamma_{i,M}\colon C(X_M)_{\sa} \to \Aff(T(A))$ by
\[
\gamma_{i,M}(f)(\tau)
= \sum_{j=1}^{k(M)} \tau(p^M_{i,j} \phi\circ s_{M,\infty}(f) p^M_{i,j}).
\]
Since the projections $p^M_{i,j}$ are non-zero and mutually orthogonal,
$\gamma_{i,M}$ is a positive, linear map with $\ker\gamma_{i,M} =
\ker s_{M,\infty}$. For all $\tau\in T(A)$ and $j_0$, we have
\[
\gamma_{i,M}(\chi_M^{j_0})(\tau)
=\sum_{j=1}^{k(M)}\tau(p^M_{i,j}\phi(s_{M,\infty}(\chi_M^{j_0}))p^M_{i,j})
=\tau(p^M_{i,j_0}) = \tau(\rho_A(\alpha_i(\chi_M^{j_0}))).
\]
So $\alpha_{i,M}, \gamma_{i,M}$ are compatible for $i=1,2,\dotsc,n$.

We inductively apply Lemma \ref{lem:gp-induction} to construct normalized
positive group homomorphisms $\alpha_{i,m}\colon K_0(C(X_m))\to K_0(A)$
for $i=1,2,\dotsc,n$ and $m\geq M$ so that
$K_0(\phi\circ s_{m,\infty}) = \sum_{i=1}^n\alpha_{i,m}$ with
$\alpha_{i,m} = \alpha_{i,m+1}\circ s_m$, and
$\ker\alpha_{i,m} \cap C(X_m,\Z) = 0$ for all $i$ with
$\ker\alpha_{i,m} = \ker\rho_{C(X_m)}$ when $i>1$.

As before, there exist non-zero mutually orthogonal projections
$p^m_{i,j}\in M_n(A)$ for $i=1,2,\dotsc,n$ and $j=1,2,\dotsc,k(m)$ such
that $[p^m_{i,j}] = \alpha_{i,m}(\chi_m^j)$ and
\[
\phi\circ s_{m,\infty}(\chi_m^j) = \sum_{i=1}^n p^m_{i,j}. 
\]
We see that $\gamma_{i,m}$ is a positive unital linear map with
$\ker\gamma_{i,m} = \ker s_{m,\infty}$. The
pair $\alpha_{i,m}, \gamma_{i,m})$ is compatible by a computation
identical to the case where $m=M$.

Let $\alpha_i$ be the homomorphism induced by the inductive limit and
the homomorphisms $\alpha_{i,m}$ and let $\gamma_i$ be the linear map
induced by the inductive limit and the linear maps $\gamma_{i,m}$.
Since
\[
K_0(\phi\circ s_{m,\infty}) =
\alpha_{1,m} + \alpha_{2,m} + \dotsb + \alpha_{n,m},
\]
by the uniqueness maps induced by the inductive limit, we have
\[
K_0(\phi) = \alpha_1 + \alpha_2 + \dotsb + \alpha_n. 
\]
Since $\ker\alpha_{i,m}\cap C(X_m,\Z) = 0$, it follows that
$\ker\alpha_i \cap C(X,\Z) = 0$ for all $i$. Also $\gamma_i$ is
injective, since $\ker\gamma_{i,m} = \ker s_{m,\infty}$. And since
$\alpha_{i,m},\gamma_{i,m}$ are compatible, we have that
$(\alpha_i, \gamma_i)$ are compatible.

By Lemma \ref{lem:reduction}, there exist
$\kappa_i\in KL_e(C(X),e_iAe_i)^{++}$ such that
$\kappa_1 + \kappa_2 + \dotsb + \kappa_n = KL(\phi)$
and homomorphisms
$\eta_i\colon \algK(C(X)) \to \algK(e_iAe_i)$ such that
$\eta_1 + \eta_2 + \dotsb + \eta_n = \phi^{\ddagger},$
and such that $\kappa_i, \gamma_i, \eta_i$ are compatible for
$i = 1, 2, \dotsc, n$.

We note that
\[
\sum_{i=1}^n \eta_i\circ s_{M,\infty}^{\ddagger} =
(\phi\circ s_{M,\infty})^{\ddagger}
\]
on $\algK(C(X_M))$.

By Theorem 4.5 of \cite{lin-trone}, there exist unital monomorphisms
$\phi_i\colon C(X)\to e_iAe_i$ such that
\begin{align*}
KL(\phi_i) &= \kappa_i, \\
\tau(\phi_i(f)) &= \gamma_i(f)(\tau), \text{ and} \\
\phi_i^{\ddagger} &= \eta_i.
\end{align*}
for all $f\in C(X)_{\sa}$ and $\tau\in T(A)$. Let
$\psi = \sum_{i=1}^n \phi_i$ 
So
\[
KL(\psi) = \sum_{i=1}^{n}KL(\phi_i) = \sum_{i=1}^{n}\kappa_i
= KL(\phi).
\]
In particular, this holds for $\P$.

Let $f\in \G$ and $\tau \in T(M_n(A))$. There exists $f'\in \G'$
so that $\norm{f - s_{M,\infty}(f')} < \delta/2$. Note that 
\begin{align*}
\tau(\psi(s_{M,\infty}(f'))) & =
\sum_{i=1}^{n}\gamma_i(s_{M,\infty}(f'))(\tau) \\ &=
\sum_{i=1}^{n}\sum_{j=1}^{k(M)}\tau(p^M_{i,j}
\phi(s_{M,\infty}(f')) p^M_{i,j}) \\ & =
\sum_{j=1}^{k(M)} \tau(\phi(s_{M,\infty}(\chi_M^j))
\phi(s_{M,\infty}(f')) \phi(s_{M,\infty}(\chi_M^j)))
\\ &= \tau(\phi(s_{M,\infty}(f'))).
\end{align*}
Consequently,
\begin{align*}
\abs{\tau(\phi(f)) - \tau(\psi(f))} & \leq 
\abs{\tau(\phi(f)) - \tau(\phi(s_{M,\infty}(f')))} \\
& \phantom{\leq} + \abs{\tau(\phi(s_{M,\infty}(f')))
  \phantom{\leq} - \tau(\psi(s_{M,\infty}(f')))} \\
& \phantom{\leq} + \abs{\tau(\psi(s_{M,\infty}(f'))) - \tau(\psi(f))} \\
& < \norm{\tau\circ\phi} \left(\delta/2\right) +
\norm{\tau\circ\psi} \left(\delta/2\right) \\
& = \delta.
\end{align*}

Let $u\in \U$. There exists $u_0\in \U'$ such that
$\dist(\bar{u},s_{M,\infty}^{\ddagger}(\bar{u}_0)) < \delta/2$.
So we have
\begin{align*}
\dist(\phi^{\ddagger}(\bar{u}),\psi^{\ddagger}(\bar{u})) & \leq
\dist(\phi^{\ddagger}(\bar{u}),(\phi\circ s_{M,\infty})(\bar{u})) \\
& \phantom{\leq} +
\dist((\phi\circ s_{M,\infty})^{\ddagger}(\bar{u}),
(\psi\circ s_{M,\infty})^{\ddagger}(\bar{u})) \\
& \phantom{\leq} +
\dist((\psi\circ s_{M,\infty})^{\ddagger}(\bar{u}),
\psi^{\ddagger}(\bar{u})) \\ & \leq
\delta/2 + 0 + \delta/2 \\
&= \delta. \qedhere
\end{align*}
\end{proof}

\section{Other Homomorphisms} \label{sec:other}

Due to a similar classification of homomorphisms from AH-algebras to
\cstar-algebras with rational tracial rank one (see \cite{lin-niu-cl}),
one might expect a similar result about approximate diagonalization.
But with one notable exception, namely when the $K_0$-group is cyclic,
none of the \cstar-algebras in this expanded class have the Riesz
interpolation property, and the Riesz interpolation property for the
$K_0$-group is a necessary condition for approximate diagonalization.
In particular, if $A$ has stable rank one and $K_0(A)$ is not an
interpolation group, then there exists a projection in $A$ that is not
approximately diagonalizable. More generally, if $A$ is stably finite
and $K_0(A)$ is not an interpolation group, then there exists a positive
integer $n\geq 1$ and a projection in $M_n(A)$ that is not approximately
diagonalizable.

\begin{thm} \label{thm:cyclic}
Let $X$ be a compact metric space such that $K_1(C(Y))$ is free for
every compact subset $Y\subseteq X$.
Let $A$ be a separable simple unital \jiangsu-stable \cstar-algebra
with rational tracial rank at most one such that $K_0(A) = \Z$.
Every unital homomorphism $\phi\colon C(X)\to A$ is
approximately diagonalizable.
\end{thm}

\begin{proof}
As before, we assume that $\phi$ is injective. This implies that $X$
has finitely many connected components, since otherwise there would
exist infinitely many positive integers with a finite sum, which is
absurd. Let $X_j$ denote the connected components of $X$ for
$j=1,\dotsc,k$ and let $\chi_j$ denote the characteristic function of
$X_j$.

Since $\sum_j [\phi(\chi_j)] = \sum_i [e_i]$, by interpolation, there
exist elements $z_{i,j} \in K_0(e_iAe_i)^{+}$ such that
\[
\sum_{i=1}^n z_{i,j} = [\phi(\chi_j)] \text{ and }
\sum_{j=1}^k z_{i,j} = [e_i].
\]
Let $S_i = \{ j\colon z_{i,j} \neq 0\}$. Let $m_j =
\min\{ i\colon j\in S_i\}$. Let $Y_i = \cup_{j\in S_i} X_j$. 

We define $\alpha_i\colon K_0(C(Y_i)) \to K_0(e_iAe_i)$ by
$\alpha_i(\chi_j) = z_{i,j}$ and 
\[
\alpha_i(g) = 
\begin{cases}
[\phi(g)] & \text{ if } i = m_j \\
0 & \text{ otherwise}
\end{cases}
\]
for $g\in \ker\rho_{C(X_j)}$. Let
$\sum_{j=1} (c_j\chi_j + g_j)\in K_0(C(X))$ be an arbitrary element,
where $c_j\in \Z$ and $g_j\in \ker\rho_{C(X_j)}$. It is not difficult
to see that $\sum_{i=1}^n \alpha_i = [\phi]$.

Note that
\[
\alpha_i(1)  = \sum_{j\in S_i} \alpha_i(z_{i,j})
=  \sum_{j=1}^k \alpha_i(z_{i,j}) = [e_i].
\]

Since $A$ has stable rank one, there exist mutually orthogonal, non-zero
projections $p_{i,j}\in A$ for $i=1,\dotsc,n$ and $j\in S_i$ such that
$\sum_{j\in S_i} p_{i,j} = 1$ and $[p_{i,j}] = z_{i,j}$. Let
$\gamma_i\colon C(Y_i)_{\sa} \to \Aff(T(e_iAe_i))$ be defined by
$\gamma_i(f)(\tau) = \sum_{j\in S_i} \tau(p_{i,j}fp_{i,j})$. Similar to
before, $\gamma_i$ is a strictly positive linear map compatible with
$\alpha_i$.

By Lemma \ref{lem:reduction}, there exist elements
$\kappa_i\in KL_e(C(Y_i),e_iAe_i)^{++}$ such that
\[
\kappa_1 + \kappa_2 + \dotsb + \kappa_n = KL(\phi)
\]
and group homomorphisms
$\eta_i\colon \algK(C(Y_i)) \to \algK(e_iAe_i)$ 
such that
\[
\eta_1 + \eta_2 + \dotsb + \eta_n = \phi^{\ddagger}
\]
and $\kappa_i, \gamma_i, \eta_i$ are compatible for
$i=1,2,\dotsc,n$.

So by Theorem 6.10 of \cite{lin-niu-cl}, there exist unital
homomorphisms $\psi_i^0\colon C(Y_i)\to e_iAe_i$ for $i=1,2,\dotsc,n$
such that
\begin{align*}
KL(\psi^0_i) &= \kappa_i, \\
\tau(\psi^0_i(f)) &= \gamma_i(f)(\tau), \text{ and} \\
(\psi^0_i)^{\ddagger} &= \eta_i.
\end{align*}
Since $C(X) = C(Y_i) \oplus C(Y_i^c)$, we can extend $\psi_i^0$ by
setting $\psi_i^0(f) = 0$ for $f\in C(Y_i^c)$.

Let $\psi = \sum_{i=1}^n \psi_i$. We can see that
\begin{align*}
KL(\psi) &= \sum_{i=1}^{n} KL(\phi_i) = \sum_{i=1}^n \kappa_i =
KL(\phi), \\
\tau(\psi(f)) &= \sum_{i=1}^n \tau(\phi_i(f)) =
\sum_{i=1}^n \gamma_i(f)(\tau) = \tau(\phi(f)), \\
\text{and }
\psi^{\ddagger}
&= \sum_{i=1}^n \phi_i^{\ddagger}
= \sum_{i=1}^n \eta_i =
\phi^{\ddagger}.
\end{align*}
So by Corollary 5.4 of \cite{lin-niu-cl}, $\phi$ and $\psi$ are
approximately unitarily equivalent.
\end{proof}

Also, in the case where the spectrum is connected, we do not require
the Riesz interpolation property.

\begin{thm} \label{thm:rationalconnected}
Let $X$ be a compact connected metric space such that $K_1(C(X))$
is free. Let $A$ be a simple separable unital \jiangsu-stable
\cstar-algebra with rational tracial rank at most one.
Every unital homomorphism $\phi\colon C(X)\to A$ is approximately
diagonalizable.
\end{thm}

\begin{proof}
Since $X$ is connected, $C(X,\Z)\cong \Z$ and so
$K_0(C(X)) = \Z \oplus \ker\rho_{C(X)}$.
Also $[1_{C(X)}] = (1,0)$ in this decomposition.
We define normalized group homomorphisms
$\alpha_i\colon K_0(C(X)) \to K_0(e_iAe_i)$ by
$\alpha_i(1_{C(X)}) = [e_i]$ on $C(X,\Z)$ and
\[
\alpha_i =
\begin{cases}
K_0(\phi) 	& \text{ if } i = 1 \\
0		& \text{ if } i \neq 1
\end{cases}
\]
on $\ker\rho_{C(X)}$. One can readily see that
$\alpha_1 + \alpha_2 + \dotsb + \alpha_n = K_0(\phi)$.

We define $\gamma_i\colon C(X)_{\sa}\to \Aff(T(e_iAe_i))$ by
$\gamma_i(f)(\tau) = \tau(e_i\phi(f)e_i)$ for $i=1,2,\dotsc,n$.
Since $\rho_{C(X)}(C(X))$ is cyclic and $\gamma_i$ is unital,
$(\alpha_i, \gamma_i)$ are compatible for $i=1,2,\dotsc,n$.

By Lemma \ref{lem:reduction}, there exist elements
$\kappa_i\in KL_e(C(X),e_iAe_i)^{++}$ such that
\[
\kappa_1 + \kappa_2 + \dotsb + \kappa_n = KL(\phi)
\]
and group homomorphisms
$\eta_i\colon \algK(C(X)) \to \algK(e_iAe_i)$ 
such that
\[
\eta_1 + \eta_2 + \dotsb + \eta_n = \phi^{\ddagger}
\]
and $\kappa_i, \gamma_i, \eta_i$ are compatible for
$i=1,2,\dotsc,n$.

So by Theorem 6.10 of \cite{lin-niu-cl}, there exist unital
homomorphisms $\psi_i\colon C(X)\to e_iAe_i$ for $i=1,2,\dotsc,n$ such
that
\begin{align*}
KL(\psi_i) &= \kappa_i, \\
\tau(\psi_i(f)) &= \gamma_i(f)(\tau), \text{ and} \\
\psi_i^{\ddagger} &= \eta_i.
\end{align*}

Let $\psi = \sum_{i=1}^n \psi_i$. We can see that
\begin{align*}
KL(\psi) &= \sum_{i=1}^{n} KL(\phi_i) = \sum_{i=1}^n \kappa_i =
KL(\phi), \\
\tau(\psi(f)) &= \sum_{i=1}^n \tau(\phi_i(f)) =
\sum_{i=1}^n \gamma_i(f)(\tau) = \tau(\phi(f)), \\
\text{and }
\psi^{\ddagger}
&= \sum_{i=1}^n \phi_i^{\ddagger}
= \sum_{i=1}^n \eta_i =
\phi^{\ddagger}.
\end{align*}
So by Corollary 5.4 of \cite{lin-niu-cl}, $\phi$ and $\psi$ are
approximately unitarily equivalent.
\end{proof}

We can also consider more general $AH$-algebras for the domains of the
homomorphisms instead of commutative \cstar-algebras. But there are few
cases where we have general results.
But even when restricted to the case of homomorphisms between
AF-algebras, approximate diagonalization becomes more difficult to
analyze.

\begin{thm} \label{thm:uniquetrace}
Let $C$ be a separable unital AH-algebra with unique tracial state and
let $A$ be a separable simple unital \cstar-algebra with tracial rank
at most one. Every unital homomorphism $\phi\colon C \to A$ is
approximately diagonalizable if for every projection $p$, there exists a
unital homomorphism $\phi\colon C\to pAp$.
\end{thm}

\begin{proof}
Since $C$ is exact, $K_0(C)$ has a unique trace. For $i>1$, by
assumption, there exists a positive group homomorphism
$\alpha_i\colon K_0(C)\to K_0(e_iAe_i)$ such that
$\alpha_i(1_C) = [e_i]$. Let
$\alpha_1 = [\phi] - \sum_{i=2}^n \alpha_i$. We wish to show $\alpha_1$
is positive. Let $\sigma$ denote the unique trace of $K_0(C)$. So given
a positive non-zero element $g\in K_0(C)^{+}$, we have $\sigma(g) > 0$.
Let $\tau$ be a trace on $K_0(A)$. Since
$\tau\circ[\phi]$ and $\tau(e_i)^{-1}(\tau\circ\alpha_i)$ are traces on
$K_0(C)$, we see that $\tau\circ[\phi] = \sigma$ and
$\tau\circ\alpha_i = \tau(e_i)\sigma$. So
\begin{align*}
\tau(\alpha_1(g)) &= \tau([\phi(g)]) - \sum_{i=2}^n \tau(\alpha_i(g))
\\ &= \sigma(g) - \sum_{i=2}^n \tau(e_i)\sigma(g) =
\tau(e_1)\sigma(g) > 0.
\end{align*}

So $\alpha_1(g) > 0$. So $\alpha_i$ is a normalized positive group
homomorphism for all $i$. Since $\Aff(T(C)) \cong \R$, there exists a
unique normalized positive linear map $\gamma_i\colon \Aff(T(C))\to
\Aff(T(e_iAe_i))$ which is compatible with $\alpha_i$ since $C$ is
exact.

By Lemma \ref{lem:reduction} and Theorem 4.5 of \cite{lin-trone}, there
exist unital homomorphisms $\psi_i\colon C\to e_iAe_i$ such that
$\sum_{i=1}^n KL(\psi_i) = KL(\phi)$, $\tau\circ\phi =
\sum_{i=1}^n \tau\circ\psi_i$ for $\tau\in T(C)$, and
$\sum_{i=1}^n \psi_i^{\ddagger} = \phi^{\ddagger}$. So by Corollary 5.4
of \cite{lin-niu-cl}, $\phi$ and $\sum_i \psi_i$ are approximately
unitarily equivalent.
\end{proof}


For a concrete example of the complications that arise, let $G_0$ denote
the subgroup of $\R^2$ generated by $(1,0)$, $(0,1)$ and
$(\sqrt{2},\sqrt{3})$ with order induced from the strict ordering on
$\R^2$. Let $u = (1,1)$. Let $H_0$ denote the subgroup of $\R$ generated
by $1$ and $\sqrt{2} + \sqrt{3}$. By the Effros-Handelman-Shen Theorem
(see Theorem 2.2 of \cite{effros-handelman-shen}), there exist unital
simple AF-algebras $C_0$ and $A_0$ such that $K_0(G_0) = C_0$ and
$K_0(A_0) = H_0$. Let $\Hom_c(G_0,H_0)$ denote the set of group
homomorphisms that ``extend'' to linear maps from $\Aff(S(G_0))$ to
$\Aff(S(H_0))$. There is a one-to-one correspondence between
$\Hom_c(G_0,H_0)$ and $\Z^2$  given by
$\alpha \mapsto (\alpha(1,0),\alpha(0,1))$. Furthermore, $\alpha$ is
positive if and only if the corresponding lattice point $(x,y)$
satisfies $y\geq \pm x(\sqrt{2}+\sqrt{3})$. Finally, let $v_1, v_2$ be
two positive even integers and let $p_1$ and $p_2$ be two projections
in $A$ with $[p_i] = v_i$. Every unital homomorphism from $C_0$ to
$A_0$ (where $[1_{A_0}] = v_1 + v_2$) is approximately diagonalizable
with respect to $p_1$ and $p_2$ if and only if
\[
\floor{\frac{v_1}{2(\sqrt{2}+\sqrt{3}}} +
\floor{\frac{v_2}{2(\sqrt{2}+\sqrt{3}}} =
\floor{\frac{v_1}{2(\sqrt{2}+\sqrt{3}} +
\frac{v_2}{2(\sqrt{2}+\sqrt{3}}}.
\]

Put geometrically, approximate diagonalization is equivalent to an
equation involving sumsets of certain subsets of cones of lattice
points.

\bibliographystyle{plain}
\bibliography{diagonal}

\end{document}